\renewcommand{\section}{\@startsection{section}{1}{0pt}{20pt}{6pt}{\large\bfseries}}
\makeatletter \@addtoreset{equation}{section} \makeatother
\theoremstyle{plain}
\newtheorem{theorem}{Theorem}[section]
\newtheorem{prop}[theorem]{Proposition}
\newtheorem{lem}[theorem]{Lemma}
\newtheorem{cor}[theorem]{Corollary}
\theoremstyle{definition}
\theoremstyle{remark}
\newtheorem{remark}[theorem]{Remark}
\newcommand{\R}{\mathbb{R}}
\newcommand{\C}{\mathbb{C}}
\newcommand{\N}{\mathbb{N}}
\newcommand{\D}{\mathfrak{D}}
\newcommand{\St}{P}
\newcommand{\Sp}{\mathbb{P}}
\newcommand{\A}{\mathbf{A}}
\newcommand{\Ut}{Q}
\newcommand{\Ul}{\mathbf{U}}
\newcommand{\Up}{\mathbb{Q}}
\newcommand{\Ee}{\textrm{E}}
\newcommand{\Hr}{\mathbf{H0}}
\newcommand{\Hn}{\mathbf{H1}}
\newcommand{\E}{\mathbb{E}}
\newcommand{\Hq}{\mathcal{H}}
\newcommand{\I}{\mathcal{I}}
\newcommand{\M}{\mathcal{M}}
\newcommand{\Ip}{\mathcal{I}_{\alpha,\psi}}
\newcommand{\Ia}{\I_{\alpha,\psi_{\gamma}}}
\newcommand{\It}{\I_{\alpha,\psi_{\theta}}}
\newcommand{\Ilq}[1]{\I_{\alpha,\psi}\left(\frac{q}{\chi};#1\right)}
\newcommand{\Itq}[1]{\I_{\alpha,\psi_{\theta}}\left(q+\theta_{\alpha};#1\right)}
\newcommand{\Nf}{\mathcal{N}}
\newcommand{\Ntq}[1]{\Nf_{\alpha,\psi,\theta}\left(\frac{q}{\chi};#1\right)}
\newcommand{\ann}{a_n(\psi;\alpha)}
\newcommand{\Id}[1]{{{\mathbb{I}}}_{\{#1\}}}
\begin{document}
\bibliographystyle{plain}

\title{$q$-invariant functions for some generalizations of the  Ornstein-Uhlenbeck  semigroup}
\author{P. Patie}
\address{Department of Mathematical Statistics and Actuarial Science\\ University of Bern\\
Sidlerstrasse, 5\\ CH-3012 Bern\\ Switzerland}
\thanks{I wish to thank an anonymous referee for very helpful comments and careful reading that led to improving the presentation of the paper. This work was partially carried out while I was visiting the Project OMEGA of INRIA at Sophia-Antipolis and the
Department of Mathematics of ETH Z\"urich. I would like to thank the members of both groups for their hospitality.}

\begin{abstract}
We show that the multiplication operator  associated to a fractional power of a Gamma random
variable, with parameter $q>0$, maps the convex cone of the $1$-invariant functions for a self-similar semigroup into the convex cone
of the $q$-invariant functions for the associated  Ornstein-Uhlenbeck (for short OU) semigroup. We also describe the harmonic
functions for some other generalizations of the OU semigroup. Among the various
applications, we characterize, through their Laplace transforms, the
laws of  first passage times above and overshoot for certain
two-sided $\alpha$-stable OU processes and also for spectrally
negative semi-stable OU processes. These Laplace
transforms are expressed in terms of a new family of power series
which includes the generalized Mittag-Leffler
functions and generalizes the family of functions introduced by Patie \cite{Patie-06c}.
\end{abstract}
\maketitle

\section{Introduction and main results}
Let $\Ee=\R, \R^+$ or $[0,\infty)$ and let $X$ be the realization
of $(\St_t)_{t \geq 0}$, a Feller semigroup on $\Ee$ satisfying, for $\alpha
> 0$, the
$\alpha$-self-similarity property, i.e.~for any $c>0$ and every $f \in \mathfrak{B}(\Ee)$, the space of bounded Borelian functions on
$\Ee$, we have the following identity
\begin{equation} \label{eq:ssp}
\St_{c^{\alpha}t}f(cx)= \St_{t}\left(d_cf\right) (x), \quad x \in \Ee,
\end{equation}
where $d_c$ is the  dilatation operator, i.e.~$d_cf(x)=f(cx)$. We
denote by $(\Sp_x)_{x\in \Ee}$ the family of probability measures of
$X$ which act on $D(\Ee)$, the Skorohod space of c\`adl\`ag
functions from $[0,\infty)$ to $\Ee$, and by $(F^X_t)_{t\geq0}$ its natural filtration.
We also mention that, throughout the paper, $\E$ stands for a reference expectation operator.
Moreover, $\A$ (resp.~$\D(\A)$) stands for
its infinitesimal generator (resp.~its domain). We have in mind the
following situations
\begin{enumerate}
\item $\Ee=\R$ and $X$ is  an $\alpha$-stable L\'evy process.
\item \label{en:ss}  $\Ee=\R^+$ or $[0,\infty)$ and $X$ is a $\frac{1}{\alpha}$-semi-stable processes in the terminology of Lamperti.
\end{enumerate}
More precisely, let $\xi$ be a L\'evy
process starting from $x \in \R$,
Lamperti \cite{Lamperti-72} showed  that the time change process
\begin{equation} \label{eq:lamp}
 X_t  = e^{\xi_{A_t}}, \: t \geq 0,
 \end{equation} where
 \begin{equation*}
 A_t=\inf
\{ u \geq 0; \: V_u := \int_0^u e^{\alpha \xi_s} \: ds> t \},
\end{equation*} is an
$\frac{1}{\alpha}$-semi-stable positive Markov process starting from $e^x$.  Further,  we
assume that $-\infty<b:=\E[\xi_1]<\infty$ and denote the
characteristic exponent of $\xi$ by $\Psi$. We also suppose that $\xi$
is not arithmetic (i.e.~ does not live on a discrete subgroup
 $r\mathbb{Z}$ for some $r>0$). Then, on the one hand, for $b>0$ (resp.~$b\geq0$ and $\xi$ is spectrally negative), it is plain that $X$ has infinite lifetime and we know from Bertoin and Yor
\cite[Theorem 1]{Bertoin-Yor-02-b} (resp.~\cite[Proposition
1]{Bertoin-Yor-02}), that the family of probability measures
$(\Sp_x)_{x>0}$ converges in the sense of finite-dimensional
distributions to a probability measure, denoted by $\Sp_{0}$, as $x
\rightarrow 0+$. On the other hand, if $b<0$, then it is plain that $\xi$ drift towards $-\infty$ and $X$ has a
finite lifetime which is $T^X_0=\inf\{u\geq0;\: X_{u-}=0, X_u=0\}$.
In this case, we assume that there exists a
unique $\theta
>0$ which yields the so-called Cram\'er condition
\begin{equation} \label{eq:cramer}
\E[e^{\theta \xi_1}] = 1.
\end{equation}
Then,
under the additional condition $0<\theta<\alpha$,
Rivero \cite{Rivero-06} showed that the minimal process $(X,T^X_0)$
admits an unique recurrent extension that hits and leaves $0$
continuously a.s.~and which is a $\frac{1}{\alpha}$-semi-stable
process on $[0,\infty)$. With a slight abuse of notation, we write
$(\Sp_x)_{x>0}$ for the family of laws of such a recurrent extension.  We
gather the different possibilities in the following.\\
\begin{itemize}
\item[$\Hr$.] $\Ee=[0,\infty)$ and either $b> 0$ or $b<0$ and \eqref{eq:cramer}
holds with $\theta<\alpha$. Moreover, if $\xi$ is spectrally negative, the case $b=0$ is also allowed.\\
\end{itemize}
 We will also use
the following hypothesis\\
\begin{itemize}
\item[$\Hn$.] $\Ee\subseteq[0,\infty)$. $\xi $ has finite
exponential moments of arbitrary positive orders,
i.e.~$\psi(m)<\infty$ for every $m\geq0$ where
$\psi(m)=-\Psi(-im)$.\\
\end{itemize}
 If $\xi$ is
spectrally negative, excluding the degenerate cases, then $\Hn$ holds and for $b \in \R$, we write
$\theta_0$ for the largest root of the equation $\psi(u)=0$. Then,
being continuous and increasing on $[\theta_0,\infty)$, $\psi$ has
a well-defined inverse function $\phi:[0,\infty)\rightarrow
[\theta_0,\infty)$ which is also continuous and  increasing.

 For $r>0$, we write $\St_t^r=e^{-rt}\St_t$. We say that a non-negative function, $\I_r$, is
$r$-excessive (resp.~$r$-invariant) for $\St_t$ if for any $t>0$,
\begin{equation*}
\St^r_t \I_r(x)\leq  \I_r(x),
\end{equation*}
(resp.~if we have $=$ in place of $\leq$) and $\lim_{t\downarrow
0}\St^r_t \I_r(x)= \I_r(x)$ pointwise.  Taking $r=1$ and writing simply
$\I=\I_1$, we have $
 \St^1_{t}\I(x) =  \I(x)$. The self-similarity property \eqref{eq:ssp} then yields
\begin{equation} \label{eq:hs}
\St^r_{t}\left(d_{r^{\frac{1}{\alpha}}}\I\right)(r^{-\frac{1}{\alpha}}x) =
 \I(x),
\end{equation}
which entails the identity $\I_r(x) =
\I(r^{\frac{1}{\alpha}}x)$  for all $ x \in  \Ee$ and $r>0$.
We denote the convex cone
of $1$-excessive (resp.~$1$-invariant) functions for $X$ by
$\mathfrak{E}(X)$ (resp.~$\mathfrak{I}(X)$).

For any $\lambda>0$, the Ornstein-Uhlenbeck (for short OU)
semigroup, $(\Ut_t)_{t\geq0}$, is defined, for $f \in \mathfrak{B}(\Ee)$, by
\begin{equation} \label{eq:ous}
\Ut_t f(x)= \St_{e_{\chi}(t)}\left(d_{e'_{\lambda}(-t)}
f  \right)(x), \quad x \in \Ee, \: t\geq0,
\end{equation}
where $e_{\lambda}(t)= \frac{e^{\lambda t}-1}{\lambda}$,
$\chi=\alpha \lambda$ and we write $v_{\lambda}(.)$ for the continuous
increasing inverse function of $e_{\lambda}(.)$. We mention that
such a deterministic transformation of self-similar processes traces
back to Doob \cite{Doob-42} who studied the generalized OU processes
driven by symmetric stable L\'evy processes. Moreover,
Carmona et al.~\cite[Proposition 5.8]{Carmona-Petit-Yor-98} showed that
$(\Ut_t)_{t\geq0}$ is a Feller semigroup with infinitesimal
generator, for $f\in \D(\A)$, given by
\begin{equation*}
\Ul f(x) =  \A f(x) - \lambda x f'(x).
\end{equation*}
Let $U$ be the realization of the Feller semigroup $(\Ut_t)_{t\geq0}$. It follows from \eqref{eq:ous} that
\begin{equation}\label{eq:defp}
U_t =e'_{\lambda}(-t)X_{e_{\chi}(t)}, \: t \geq0.
\end{equation}
We deduce, with obvious notation, that $T^U_0=v_{\chi}(T^X_0)$ a.s.. If $\Ee=\R$ (resp.~otherwise), we call $U$ a self-similar
(resp.~semi-stable) OU process. We denote by $(\Up_x)_{x>0}$ the family of probability
measures of a \emph{semi-stable} OU process. We deduce, from the Lamperti mapping \eqref{eq:lamp} and  the discussions above the following.
\begin{prop}
For any $x>0$, there exists a one to one mapping between the
law of a L\'evy process starting from $\log(x)$ and the law of a semi-stable OU process starting from $x$. More precisely, we have
\begin{equation} \label{eq:mou}
U_t = e^{-\lambda t }e^{\xi_{\bigtriangleup_t}},\: t<T^U_0,
\end{equation}
where $\bigtriangleup_{t} =\int_0^t U_s^{-\alpha} ds$. Note that for $b>0$, the previous identity holds for any $t\geq0$.

 Moreover,  if \eqref{eq:cramer} holds with $0<\theta<\alpha$, then the minimal process $(U,T^U_0)$
admits a recurrent extension which hits and leaves $0$
continuously a.s.~which is the OU process associated to
$(X,\Sp_x)$. We write its family of laws by $(\Up_x)_{x>0}$.
\end{prop}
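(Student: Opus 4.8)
The plan is to establish the two assertions of the proposition separately, both by transporting known facts about semi-stable processes through the deterministic time-and-scale change that defines the OU semigroup. For the first part, I would start from the Lamperti representation \eqref{eq:lamp}, namely $X_t=e^{\xi_{A_t}}$ with $A_t$ the inverse of the additive functional $V_u=\int_0^u e^{\alpha\xi_s}\,ds$, and feed it into the identity \eqref{eq:defp}, $U_t=e'_{\lambda}(-t)X_{e_{\chi}(t)}$. Writing out $U_t=e^{-\lambda t}e^{\xi_{A_{e_{\chi}(t)}}}$ and setting $\bigtriangleup_t:=A_{e_{\chi}(t)}$, it remains to identify $\bigtriangleup_t$ with $\int_0^t U_s^{-\alpha}\,ds$. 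This is the computational heart of the first claim: one differentiates $e_{\chi}(t)=V_{\bigtriangleup_t}$ (valid before the relevant hitting time, using that $A$ is a genuine inverse) to get $e_{\chi}'(t)=e^{\alpha\xi_{\bigtriangleup_t}}\,\dot{\bigtriangleup}_t$, and since $e_{\chi}'(t)=e^{\chi t}=e^{\alpha\lambda t}$ and $e^{\alpha\xi_{\bigtriangleup_t}}=(U_te^{\lambda t})^{\alpha}$, this rearranges to $\dot{\bigtriangleup}_t=U_t^{-\alpha}$, giving $\bigtriangleup_t=\int_0^t U_s^{-\alpha}\,ds$ as desired. The one-to-one correspondence between the law of $\xi$ started at $\log x$ and the law of $U$ started at $x$ then follows because \eqref{eq:mou} expresses $U$ as an explicit measurable functional of the path of $\xi$, while conversely $\xi$ can be recovered from $U$ by reading off $\log(U_te^{\lambda t})$ along the time substitution inverse to $\bigtriangleup$; the hypothesis that $\xi$ is not arithmetic and $-\infty<b<\infty$ guarantees the functionals $V$ and $\bigtriangleup$ are well-defined and finite on the appropriate time interval. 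The remark about $b>0$ extending the identity to all $t\geq 0$ is immediate since then $X$, hence $U$, has infinite lifetime and $T^U_0=+\infty$.

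For the second part, I would invoke Rivero's theorem \cite{Rivero-06} as quoted in the introduction: under \eqref{eq:cramer} with $0<\theta<\alpha$, the minimal semi-stable process $(X,T^X_0)$ has a unique recurrent extension that hits and leaves $0$ continuously a.s.\ and remains $\frac1\alpha$-semi-stable; write $(\Sp_x)_{x>0}$ for its laws. The OU semigroup $(\Ut_t)_{t\geq0}$ is built from $(\St_t)_{t\geq0}$ by the prescription \eqref{eq:ous}, which is a bijective deterministic transformation (the maps $e_{\lambda}$, $v_{\lambda}$, $e'_{\lambda}$ are continuous bijections on the relevant domains), so it carries the recurrent-extension property over verbatim: starting from $(X,\Sp_x)$ one forms $U$ via \eqref{eq:defp}, and because $T^U_0=v_{\chi}(T^X_0)$ with $v_{\chi}$ continuous and strictly increasing, the excursions of $U$ away from $0$ are time-changes of excursions of $X$, so $U$ inherits continuous entrance to and exit from $0$. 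I would note that Carmona et al.~\cite[Proposition 5.8]{Carmona-Petit-Yor-98} already tells us $(\Ut_t)_{t\geq0}$ is Feller, which together with the strong Markov property of the recurrent extension of $X$ shows $(U,\Up_x)_{x>0}$ is a well-defined strong Markov process; uniqueness of the extension for $U$ follows from uniqueness for $X$ by pulling any candidate extension of $U$ back through the inverse transformation.

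The main obstacle is a matter of care rather than of depth: one must be scrupulous about \emph{where} the various identities hold, i.e.\ distinguishing the regime $t<T^U_0$ (where \eqref{eq:mou} holds as a pathwise identity and the time change $\bigtriangleup_t$ is the honest inverse of $V$) from the recurrent-extension regime (where the identity must be read excursion-by-excursion), and about checking that the time substitution $\bigtriangleup$ and its building block $A$ are strictly increasing and continuous on the right intervals so that "differentiate the inverse relation" is legitimate. A secondary subtlety is verifying that the self-similarity exponent is preserved: the OU process is $\frac1\alpha$-semi-stable in the sense appropriate to the OU family, and one should confirm this is consistent with \eqref{eq:ssp} and the definition \eqref{eq:ous}, using $\chi=\alpha\lambda$, so that the phrase "which is a $\frac{1}{\alpha}$-semi-stable process" in the hypotheses transfers correctly. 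None of this requires new ideas beyond the Lamperti representation and Rivero's and Bertoin--Yor's results already cited, so the proof is essentially an organized bookkeeping of known transfers under a bijective deterministic change of time and space.
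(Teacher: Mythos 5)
Your proposal is correct and follows essentially the same route the paper intends: the paper offers no separate proof, stating only that the proposition is ``deduced from the Lamperti mapping \eqref{eq:lamp} and the discussions above,'' and your argument is exactly that deduction made explicit --- composing $X_t=e^{\xi_{A_t}}$ with $U_t=e'_{\lambda}(-t)X_{e_{\chi}(t)}$ and identifying $\bigtriangleup_t=A_{e_{\chi}(t)}=\int_0^t U_s^{-\alpha}\,ds$ by differentiating the inverse relation (an identity the paper itself records later, in the proof of the Lemma inside the proof of Theorem~\ref{thm:2}), then transporting Rivero's recurrent extension through the deterministic time--space change. No discrepancy with the paper's reasoning.
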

Under the condition
$\Hn$, for any $\gamma>0$, we denote by $\Up^{(\gamma)}_x$  the law of the semi-stable OU process, starting at $x\in \R^+$,
associated to the L\'evy process having Laplace exponent
$\psi_{\gamma}(u)=\psi(u+\gamma)-\psi(\gamma), \: u\geq0$. \\We say that a probability measure $m$ is
invariant for $U$ if it satisfies, for any $f\in \mathfrak{B}(\Ee)$,
\begin{equation*}
 \int_{\Ee} \Ut_t f(x)m(dx)=\int_{\Ee} f(x)m(dx).
\end{equation*}
Finally,
let $G_q$ be a gamma random variable independent of $X$, with parameter $q>0$, whose law is given by $
\gamma(dr) = \frac{e^{-r}r^{q-1}}{\Gamma(q)} dr$.
We are now ready to state the following.
\begin{theorem} \label{thm:1}If $\Ee=\R $ or {\bf{H0}}
holds then  the Feller process $U$ is positively recurrent and its
unique invariant measure is $\chi \, \Sp_{0}(X_1\in dx)$.

Next, assume that $\I\in L^1(\gamma(dr))$.  For any $q
>0$, we introduce the function $\I(q;x)$ defined by
\begin{equation} \label{eq:inv}
\I(q;x) = \chi^{\frac{q}{\chi }}\E\left[\I\left(\left(\chi G_{\frac{q}{\chi
}}\right)^{\frac{1}{\alpha}}x\right)\right], \: x \in \Ee.
\end{equation}
Then, if  $\I\in \mathfrak{I}(X) \: (\textrm{resp.~}\mathfrak{E}(X))$ then $\I(q;x) \in \mathfrak{I}^q(U)\: (\textrm{resp.~}\mathfrak{E}^q(U))$.

Consequently, if $\I\in \mathfrak{I}(X)$, we have, for  any $q>0$,
\begin{equation} \label{eq:ts}
(1+\chi t)^{-\frac{q}{\chi}}\St_t \left(d_{(1+ \chi t)^{-\frac{1}{\alpha}}}\I \right)\left(q;x\right) = \I(q;x), \: x\in \Ee.
\end{equation}
\end{theorem}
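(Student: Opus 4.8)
The plan is to treat the three assertions in turn, the heart of the matter being the middle one, from which the invariant-measure claim and the final identity \eqref{eq:ts} follow by specialization and unwinding of definitions.

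First I would dispose of the opening sentence. When $\Ee=\R$ or $\Hr$ holds, the process $U$ has infinite lifetime, and by \eqref{eq:defp} together with the self-similarity \eqref{eq:ssp} one computes, for $f\in\mathfrak B(\Ee)$, that $\int_\Ee \Ut_tf(x)\,\mu(dx)=\int_\Ee f(x)\,\mu(dx)$ where $\mu(dx)=\chi\,\Sp_0(X_1\in dx)$; the key input is the scaling $\Sp_0(X_c\in dx)=\Sp_0(X_1\in c^{-1/\alpha}dx)$ coming from \eqref{eq:ssp} applied under $\Sp_0$, which matches the time-change $e_\chi(t)$ with the dilation $e'_\lambda(-t)=(1+\chi t)^{-1/\chi}$. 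Positive recurrence and uniqueness then follow from the Feller property and the convergence $\Sp_x\to\Sp_0$ quoted from Bertoin--Yor (resp.\ the recurrent-extension construction of Rivero under \eqref{eq:cramer}). I would keep this part brief since it is essentially a change-of-variables bookkeeping.

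For the central claim, fix $\I\in\mathfrak I(X)$, so $\St^1_t\I=\I$, equivalently (by \eqref{eq:hs}) $\St_t(d_{s^{1/\alpha}}\I)(x)=e^{t}\,s^{-1/\alpha}\cdots$ — more precisely $\St_t\big(d_{(1+t)^{1/\alpha}}\I\big)(x)=\I(x)$ after the rescaling that turned $1$-invariance into the self-similar statement $\St_t(d_c\I)=\I$ with $c=(1+t)^{1/\alpha}$. Now apply $\Ut^q_t=e^{-qt}\Ut_t$ to $\I(q;\cdot)$. Using \eqref{eq:ous}, $\Ut_t\I(q;\cdot)(x)=\St_{e_\chi(t)}\big(d_{e'_\lambda(-t)}\I(q;\cdot)\big)(x)$; insert the definition \eqref{eq:inv} of $\I(q;\cdot)$ and pull the expectation over $G_{q/\chi}$ through $\St_{e_\chi(t)}$ (Fubini, justified by $\I\in L^1(\gamma)$ and positivity in the excessive case). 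One is left with $\St_{e_\chi(t)}$ applied to $x\mapsto \I\big((\text{const}\cdot G_{q/\chi})^{1/\alpha} e'_\lambda(-t)x\big)$, and by the self-similar invariance of $\I$ this equals $\I$ evaluated at a rescaled argument — the scaling parameter becomes $G_{q/\chi}$ multiplied by a deterministic factor involving $1+e_\chi(t)$ and $e'_\lambda(-t)=(1+\chi t)^{-1/\chi}$. The main obstacle, and the step I would spend the most care on, is the algebraic identity that makes this work: one must verify that
\[
e^{-qt}\,\chi^{q/\chi}\,\E\!\left[\I\Big(\big(\chi(1+e_\chi(t))G_{q/\chi}\big)^{1/\alpha}\,e'_\lambda(-t)\,x\Big)\right]
= \chi^{q/\chi}\,\E\!\left[\I\big((\chi G_{q/\chi})^{1/\alpha}x\big)\right],
\]
and this reduces to the distributional statement that $(1+e_\chi(t))\,G_{q/\chi}$ has the same law as $G_{q/\chi}$ after the deterministic rescaling that the factor $e^{-qt}$ and $e'_\lambda(-t)$ supply — concretely, that $\beta G_{q/\chi}\stackrel{d}{=}G_{q/\chi}$ reweighted by $\beta^{q/\chi}$ in the sense that $\E[g(\beta G_{q/\chi})]=\beta^{-q/\chi}\E[g(G_{q/\chi})]$ fails in general but the specific combination here telescopes because $\beta=e'_\lambda(-t)^{-\alpha}(1+e_\chi(t))^{-1}\cdot(\cdots)$ collapses to $1$ thanks to $(1+\chi t)(1+e_\chi(t))^{-1}\cdot e^{-\chi t}=1$. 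I would isolate this as a short computational lemma: $1+e_\chi(t)=e^{\chi t}$ is false, but $\chi\,e_\chi(t)=e^{\chi t}-1$ gives $1+\chi e_\chi(t)=e^{\chi t}$, and since $e'_\lambda(-t)=(e^{\chi t})^{-1/\chi}=(1+\chi e_\chi(t))^{-1/\chi}$ — that is the identity \eqref{eq:ts} at the level of the underlying self-similar semigroup, which is exactly what the self-similarity \eqref{eq:ssp} encodes. The lower semicontinuity at $t\downarrow0$, $\lim_{t\downarrow0}\Ut^q_t\I(q;\cdot)=\I(q;\cdot)$, follows from dominated convergence using the Feller property of $\St$ and the $L^1(\gamma)$ bound. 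The resp.\ statement for $\mathfrak E(X)$ is identical with $=$ replaced by $\leq$ everywhere, the inequality being preserved because all operations (applying the positive operator $\St_{e_\chi(t)}$, multiplying by $e^{-qt}>0$, integrating against $\gamma$) are monotone.

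Finally, \eqref{eq:ts} is just the previous display rewritten: having shown $\I(q;\cdot)\in\mathfrak I^q(U)$, i.e.\ $\Ut^q_t\I(q;\cdot)=\I(q;\cdot)$, one substitutes \eqref{eq:ous} and uses $e'_\lambda(-t)=(1+\chi e_\chi(t))^{-1/\alpha}$ — wait, more carefully, one substitutes the value of $e_\chi(t)$ in terms of a fresh time variable. Setting $s=e_\chi(t)$ so that $1+\chi t$ is replaced appropriately; the cleanest route is to observe that \eqref{eq:ts} is precisely the self-similar reformulation \eqref{eq:hs} of $q$-invariance for $U$ transported back to $\St$ via \eqref{eq:ous}, exactly as \eqref{eq:hs} was derived from $\St^1_t\I=\I$. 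So \eqref{eq:ts} requires no new work beyond renaming $e_\chi(t)\mapsto t$ and $e'_\lambda(-t)\mapsto(1+\chi t)^{-1/\alpha}$, $e^{-qt}\mapsto(1+\chi t)^{-q/\chi}$, which are consistent because $v_\chi(t)=\frac1\chi\log(1+\chi t)$ gives $e^{-q v_\chi(t)}=(1+\chi t)^{-q/\chi}$ and $e'_\lambda(-v_\chi(t))=(1+\chi t)^{-1/\alpha}$.
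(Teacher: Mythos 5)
Your overall strategy coincides with the paper's: write $e^{-qt}\Ut_t\I(q;x)$ via \eqref{eq:ous}, exchange the expectation under $\Sp_x$ with the integral against the Gamma law (Fubini, justified by $\I\in L^1(\gamma(dr))$ and positivity), use the $1$-invariance of $\I$ in its self-similar form \eqref{eq:hs}, and finally obtain \eqref{eq:ts} by the substitution $u=v_\chi(t)$. The opening claim about the invariant measure is, in the paper, simply imported from Carmona, Petit and Yor and extended under $\Hr$ via the entrance law at $0$, so your sketch there is acceptable in outline.

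However, the ``short computational lemma'' that you rightly identify as the crux is false as you display it, and the mechanism you propose to verify it would not work. The identity \eqref{eq:hs} does not say $\St_t\bigl(d_{(1+t)^{1/\alpha}}\I\bigr)(x)=\I(x)$ (this is not true), nor does applying $\St_{e_\chi(t)}$ to $d_c\I$ merely rescale the argument of $\I$. What \eqref{eq:hs} gives is
\[
\E_x\left[\I\left(u^{1/\alpha}X_{s}\right)\right]=e^{us}\,\I\left(u^{1/\alpha}x\right),
\]
and the prefactor $e^{us}$ is indispensable. With $s=e_\chi(t)$ and $u=\chi r\,(e'_\lambda(-t))^{\alpha}=\chi r e^{-\chi t}$ this prefactor equals $e^{r(1-e^{-\chi t})}$; integrated against $e^{-r}r^{q/\chi-1}\,dr$ it tilts the Gamma law from rate $1$ to rate $e^{-\chi t}$, and the normalization of the tilted law produces exactly the factor $e^{qt}$ that cancels $e^{-qt}$, while the change of variable $r\mapsto re^{\chi t}$ restores the argument $(\chi r)^{1/\alpha}x$. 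Your displayed identity contains no such tilt inside the expectation: even after correcting $1+e_\chi(t)$ to $1+\chi e_\chi(t)=e^{\chi t}$ (so that $\bigl(1+\chi e_\chi(t)\bigr)^{1/\alpha}e'_\lambda(-t)=1$), its left-hand side collapses to $e^{-qt}\,\I(q;x)$, which is not $\I(q;x)$. The ``telescoping'' you invoke (a deterministic factor collapsing to $1$) therefore cannot close the argument; what closes it is the exponential reweighting of $G_{q/\chi}$, which is precisely the change of variable $u=\chi e^{-\chi t}r$ performed in the paper. Two smaller slips of the same kind: $e'_\lambda(-t)=e^{-\lambda t}$, not $(1+\chi t)^{-1/\chi}$; the expression $(1+\chi t)^{-1/\alpha}$ only appears after the substitution $t\mapsto v_\chi(t)$, which is how \eqref{eq:ts} is deduced in the last step.
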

\begin{remark}
\begin{enumerate}
\item We call the multiplication operator \eqref{eq:inv} associated to a fractional power of a Gamma random variable,  the {\it $\Gamma$-transform}.

\item The characterization of time-space invariant functions of
the form \eqref{eq:ts}, associated to self-similar processes, has
been first identified by Shepp \cite{Shepp-67} in the case of the
Brownian motion and by several authors for some specific
processes: Yor \cite{Yor-84} for the Bessel processes,
Novikov \cite{Novikov-83} and  Patie \cite{Patie-OU-06} for the one
sided-stable processes. Whilst in the mentioned papers, the
authors made used of specific properties of the studied processes
to  derive the time-space martingales, we provide a proof which is
based simply on the self-similarity property.
\end{enumerate}
\end{remark}

We proceed by investigating the process $Y$, defined, for any $x,\beta \in \R$  and $\xi_0=0$ a.s., by
\begin{equation} \label{eq:defy}
Y_t = e^{\alpha \xi_t} \left(x +  \beta  \int_0^t e^{-\alpha
\xi_{s}}ds\right),\: t\geq 0.
\end{equation}
We call $Y$ the L\'evy OU process. We mention that this generalization of the OU process is a specific instance of the continuous analogue of random recurrence equations, as shown by de Haan  and Karandikar \cite{Haan-Karandikar-89}. They have been also well-studied  by Carmona et al.~\cite{Carmona-Petit-Yor-97}, Erickson and Maller \cite{Erickson-Maller-05}, Bertoin et al.~\cite{Bertoin-Lindner-07} and by Kondo et al.~\cite{Kondo-Sato-06}. In \cite{Carmona-Petit-Yor-97}, it is proved that $Y$ is a homogeneous Markov process with respect to the filtration generated by $\xi$. Moreover, they showed, from the stationarity and the independency of the increments of $\xi$, that, for any fixed $t\geq0$,
\begin{equation*}
Y_t \stackrel{(d)}{=} xe^{\alpha \xi_t} + \beta  \int_0^t
e^{\alpha \xi_{s}}ds.
\end{equation*} Then, if $\E[\xi_1]<0$, they deduced that, as $t\rightarrow \infty$, $\xi_t \stackrel{(a.s)}{\rightarrow}- \infty$ and
$Y_t\stackrel{(d)}{\rightarrow} \beta  V_{\infty}=\int_0^{\infty}
e^{\alpha \xi_{s}}ds$. We refer to Bertoin and Yor \cite{Bertoin-Yor-05} for a thorough survey on the exponential functional of L\'evy processes. In the spectrally negative case, it is well know that the law of $V_{\infty}$ is
self-decomposable, hence absolutely continuous and unimodal. Moreover, under the additional assumption that $\theta<\alpha$, its law has been computed in term
of the Laplace transform by Patie \cite{Patie-06c}.
Now, we introduce the process $Z$ defined, for any $x\neq 0,\beta \in \R$  and $\xi_0=0$ a.s., by
\begin{equation} \label{eq:defz}
Z_t =  e^{\alpha \xi_t} \left(x + \beta \int_0^t e^{\alpha \xi_s} ds\right)^{-1}, \: t\geq 0.
\end{equation}
Before stating the next result, we introduce some notation.  Let $B$ be a Borel subset of $\Ee$ and we write $T^{U}_{B}$ for the first
exit time from  $B$ by $U$. With a slight abuse of terminology, we say that for any $x\in \Ee$, a non-negative function  $\Hq$  is a $(q\Delta,{B})$-harmonic function
for $(U,\Up_x)$ if
\begin{equation}
\E_x\left[e^{-q\bigtriangleup_{T^{U}_{B}}}\Hq(U_{T^{U}_{B}})\Id{T^{U}_{B}<T^{U}_0}\right]=\Hq(x).
\end{equation}
When $\Delta_t$ is replaced by $t$ in the previous expression, we simply say that $\Hq$ is a $(q,B)$-harmonic function  for $(U,\Up_x)$. We are ready to state the following.

\begin{theorem} \label{thm:2}
Set $\beta=\alpha \lambda x$ in \eqref{eq:defz}. Then, to a process $Z$ starting from $\frac{1}{x}$, with $x\neq 0$, one can associate a semi-stable OU process
$(U,\Up_1)$ such that
\begin{equation} \label{eq:y_u}
Z_{t} =
x^{-1} U^{\alpha}_{\bigtriangledown_t}, \: t<T_0^U,
\end{equation}
where $\bigtriangledown_t = \int_0^t Z_s ds$ and its inverse
is given by $\bigtriangleup_t=\int_0^tU_s^{-\alpha} ds$. Note that for $b>0$, the previous identity holds for any $t\geq0$. Consequently, with $x>0$, $Z$ is a Feller process on $(0,\infty)$.

Moreover, let $q>0$, $
0 \leq a<b\leq +\infty  $ and $ x \in (a,b)$. Then,  a
 $(q\Delta,T^U_{(ax,bx)})$-harmonic function for $(U,\Up_1)$ is a $(q,T^Z_{(a^{\alpha}, b^{\alpha})})$-harmonic function for the process $Z$ starting from $x^{-\alpha}$. Similarly, a  $(q\Delta,T^U_{(\frac{x}{b},\frac{x}{a})})$-harmonic for $(U,\Up_1)$ is a $(q,T^{\widehat{Y}}_{(a^{\alpha}, b^{\alpha})})$-harmonic function for the process $\widehat{Y}$ starting from $x^{\alpha}$, the L\'evy OU process
associated to the L\'evy process
$\widehat{\xi}=-\xi$, the dual of $\xi$ with respect to the Lebesgue
measure.

Finally, assume that $\Hn$ holds and  write $p_{q}(x)=x^{q}$ for  $x,q>0$. If the function $\Hq$ is $(\lambda \phi(q),B)$-harmonic function for $(U,\Up^{(\phi(q))}_x)$ then the function $p_{\phi(q)}\Hq$ is $(q\Delta,B)$-harmonic function for $(U,\Up_x)$.\\

\end{theorem}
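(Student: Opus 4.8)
The plan is to descend, by means of the representation \eqref{eq:mou}, to the level of the driving L\'evy process $\xi$ and to perform a Cram\'er (Esscher) change of measure there. Put $\gamma=\phi(q)$; since $\phi$ is the inverse of $\psi$ restricted to $[\theta_0,\infty)$ we have $\psi(\gamma)=q$, and under $\Hn$ all exponential moments of $\xi$ are finite, so $\psi_{\gamma}(u)=\psi(u+\gamma)-\psi(\gamma)=\psi(u+\gamma)-q$ is the Laplace exponent of the Esscher transform $\xi^{(\gamma)}$ of $\xi$, and $\Up^{(\phi(q))}_{x}$ is by definition the law of the associated semi-stable OU process. Let $\rho^{*}=\bigtriangleup_{T^{U}_{B}}$ be the value of the clock $\bigtriangleup_{t}=\int_{0}^{t}U_{s}^{-\alpha}\,ds$ at the exit of $U$ from $B$. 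First I would record that $\bigtriangleup$ is continuous, strictly increasing, and $\bigtriangleup_{t}\uparrow\infty$ as $t\uparrow T^{U}_{0}$, so that $\{T^{U}_{B}<T^{U}_{0}\}=\{\rho^{*}<\infty\}$; and that, because \eqref{eq:mou} can be inverted into the closed form $U_{t}=\bigl(1+\chi\int_{0}^{\bigtriangleup_{t}}e^{\alpha\xi_{u}}\,du\bigr)^{-1/\alpha}e^{\xi_{\bigtriangleup_{t}}}$, the path of $U$ on $[0,t]$ is a deterministic functional of the path of $\xi$ on $[0,\bigtriangleup_{t}]$; hence $\rho^{*}$ is a stopping time of the natural filtration $(\mathcal{F}^{\xi}_{\rho})_{\rho\ge0}$ of $\xi$, and $T^{U}_{B}$ and $U_{T^{U}_{B}}$ are $\mathcal{F}^{\xi}_{\rho^{*}}$-measurable.

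Next I would run the change of measure. Under $\Up_{x}$ the process $\xi$ starts from $\log x$, and $M_{\rho}:=e^{\gamma(\xi_{\rho}-\log x)-q\rho}$, $\rho\ge0$, is a mean-one $(\mathcal{F}^{\xi}_{\rho})$-martingale whose induced tilt turns $\xi$ into $\xi^{(\gamma)}$; transported through the pathwise-identical recipe above it turns $\Up_{x}$ into $\Up^{(\phi(q))}_{x}$, under which $T^{U}_{B}$, $\rho^{*}$ and $U_{T^{U}_{B}}$ keep their meaning. The elementary identity that makes everything fit is, using $\psi(\gamma)=q$ and then $e^{\xi_{\rho^{*}}}=e^{\xi_{\bigtriangleup_{T^{U}_{B}}}}=e^{\lambda T^{U}_{B}}U_{T^{U}_{B}}$ (again \eqref{eq:mou}),
\begin{equation*}
e^{-q\bigtriangleup_{T^{U}_{B}}}=e^{-q\rho^{*}}=x^{\gamma}\,e^{-\gamma\xi_{\rho^{*}}}\,M_{\rho^{*}}=x^{\gamma}\,e^{-\lambda\gamma T^{U}_{B}}\,(U_{T^{U}_{B}})^{-\gamma}\,M_{\rho^{*}}.
\end{equation*}

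Finally I would insert this into the defining expectation for $p_{\phi(q)}\Hq$. Applying the change of measure at the stopping time $\rho^{*}$ (legitimate because $\{\rho^{*}\le\rho\}\in\mathcal{F}^{\xi}_{\rho}$ and the stopped martingale $(M_{\rho\wedge\rho^{*}})$ is uniformly integrable, the null event $\{\rho^{*}=\infty\}$ being absorbed by the indicator), and cancelling the factor $(U_{T^{U}_{B}})^{-\gamma}$ against the $(U_{T^{U}_{B}})^{\gamma}$ produced by $p_{\phi(q)}(U_{T^{U}_{B}})$, I obtain
\begin{equation*}
\E_{x}\!\left[e^{-q\bigtriangleup_{T^{U}_{B}}}\,(p_{\phi(q)}\Hq)(U_{T^{U}_{B}})\,\Id{T^{U}_{B}<T^{U}_{0}}\right]=x^{\phi(q)}\,\E^{(\phi(q))}_{x}\!\left[e^{-\lambda\phi(q)T^{U}_{B}}\,\Hq(U_{T^{U}_{B}})\,\Id{T^{U}_{B}<T^{U}_{0}}\right],
\end{equation*}
where $\E^{(\phi(q))}_{x}$ denotes expectation under $\Up^{(\phi(q))}_{x}$. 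By the hypothesis that $\Hq$ is $(\lambda\phi(q),B)$-harmonic for $(U,\Up^{(\phi(q))}_{x})$ the right-hand expectation equals $\Hq(x)$, so the left side equals $x^{\phi(q)}\Hq(x)=(p_{\phi(q)}\Hq)(x)$, i.e.~$p_{\phi(q)}\Hq$ is $(q\bigtriangleup,B)$-harmonic for $(U,\Up_{x})$, as claimed.

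The computation itself is just the short cancellation above; the real work is in the justifications of the middle paragraph. I expect the main obstacle to be two interlocking technical points: that $\rho^{*}$ is genuinely an $(\mathcal{F}^{\xi}_{\rho})$-stopping time and that Esscher's change of measure may be carried out at $\rho^{*}$ (uniform integrability of the stopped density, together with the fact that one never needs $\rho^{*}<\infty$ under $\Up^{(\phi(q))}_{x}$); and that on $\{T^{U}_{B}<T^{U}_{0}\}$, on which the whole identity lives, the transformed law is exactly the \emph{minimal} semi-stable OU process attached to $\psi_{\gamma}$, so that the recurrent-extension behaviour of $U$ at $0$ plays no role even when $B$ abuts $0$.
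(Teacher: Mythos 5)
Your argument addresses only the final assertion of the theorem. The statement also claims (i) the pathwise identity \eqref{eq:y_u} between $Z$ (with $\beta=\alpha\lambda x$) and a semi-stable OU process, together with the Feller property of $Z$ on $(0,\infty)$, and (ii) the transfer of $(q\Delta,\cdot)$-harmonic functions of $U$ into $(q,\cdot)$-harmonic functions of $Z$ and of $\widehat{Y}$. Neither appears anywhere in your proposal, and neither follows from the Esscher computation. The paper proves \eqref{eq:y_u} by substituting $\beta=\alpha\lambda x$ into \eqref{eq:defz}, writing $Z_t=x^{-1}e^{\alpha\xi_t}\left(1+\chi V_t\right)^{-1}$ and recognizing this as the composition of the Lamperti map \eqref{eq:lamp} with the OU time change \eqref{eq:ous}, then differentiating $v_{\chi}(V_t)$ to identify the clock $\bigtriangledown$; the Feller property comes from Lamperti's theorem on time changes of Feller processes; and the harmonic-function correspondences follow from the resulting identity $T^Z_{(a,b)}=\bigtriangleup\left(T^U_{((ax)^{1/\alpha},(bx)^{1/\alpha})}\right)$ together with $\widehat{Y}_t=1/Z_t$. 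As it stands, most of the theorem is left unproved.

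For the part you do prove, the argument is correct and is essentially the paper's, carried out one level further down. The paper isolates the change of measure as a Lemma, the absolute-continuity relation $d\Up^{(\gamma)}_{x}=\left(U_t/x\right)^{\gamma-\delta}e^{\lambda(\gamma-\delta)t-(\psi(\gamma)-\psi(\delta))\bigtriangleup_t}\,d\Up^{(\delta)}_{x}$ on $F^U_t\cap\{t<T^U_0\}$, obtained by time-changing the power Girsanov transform of the self-similar process $X$ from \cite{Patie-06c}, and then applies optional stopping at $T^U_B$ with $\gamma=\phi(q)$, $\delta=0$. Your Esscher martingale $M_{\rho}=e^{\gamma(\xi_{\rho}-\log x)-q\rho}$ is precisely the Lamperti/OU preimage of that density, and your cancellation $e^{-q\bigtriangleup_{T^U_B}}=x^{\gamma}e^{-\lambda\gamma T^U_B}(U_{T^U_B})^{-\gamma}M_{\rho^*}$ reproduces the same identity. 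One small remark: uniform integrability of $(M_{\rho\wedge\rho^*})$ is neither obvious nor needed; the change of measure at the stopping time $\rho^*$ restricted to $\{\rho^*<\infty\}$ follows from the standard localization argument (optional stopping on $\{\rho^*\le\rho\}$ for fixed $\rho$, then monotone convergence as $\rho\to\infty$), which is what the paper's appeal to the optional stopping theorem amounts to.
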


\section{Proofs}
\subsection{Proof of Theorem \ref{thm:1}}
The description of the unique invariant measure is a refinement of
\cite[Proposition 5.7]{Carmona-Petit-Yor-98} where therein the proof
is provided for $\R$-valued self-similar processes and can be
extended readily for the $\R^+$-valued case under the condition
{\bf{H0}}, which ensures that $(X, \Sp_x)$ admits an entrance law at $0$.\\
Next, let us assume that $\I\in L^1(\gamma(dr))\cap \mathfrak{I}(X)$. We need to show that for any $q >0$, $e^{-qt}\Ut_t\I(q;x) = \I(q;x)$. For $x\in \Ee$, we deduce from the definition of $(\Ut_t)_{t\geq0}$ that
\begin{eqnarray*}
 e^{-qt}\Ut_t\I(q;x)&=&\frac{\chi^{\frac{q}{\chi}}}{\Gamma\left(\frac{q}{\chi}\right)}e^{-qt}\E_x\left[\int_0^{\infty}
\I\left((\chi r)^{\frac{1}{\alpha}}U_t\right)e^{-r} r^{\frac{q}{\chi}-1} dr\right] \\
&=&\frac{\chi^{\frac{q}{\chi}}}{\Gamma\left(\frac{q}{\chi}\right)} e^{-qt}\E_x\left[\int_0^{\infty}
\I\left((\chi r)^{\frac{1}{\alpha}}e'_{\lambda}(-t)X_{e_{\chi}(t)}\right)e^{-r} r^{\frac{q}{\chi}-1} dr\right].
\end{eqnarray*}
Using the change of variable $u=\chi e'_{\chi}(-t)r$, Fubini theorem and \eqref{eq:hs}, we get
\begin{eqnarray*}
e^{-qt}\Ut_t\I(q;x) &=& \frac{1}{\Gamma\left(\frac{q}{\chi}\right)}\E_x\left[\int_0^{\infty}
e^{-u e_{\chi}(t)}\I\left(u^{\frac{1}{\alpha}}X_{e_{\chi}(t)}\right)e^{-\frac{u}{\chi}} u^{\frac{q}{\chi}-1} du\right]\\
&=& \frac{1}{\Gamma\left(\frac{q}{\chi}\right)}\int_0^{\infty} \I\left(u^{\frac{1}{\alpha}}x\right)e^{-\frac{u}{\chi}} u^{\frac{q}{\chi}-1} du \\
&=& \I(q;x)
\end{eqnarray*}
where the last line follows after the change of variable $u=\chi r$.
The case  $\I\in L^1(\gamma(dr))\cap \mathfrak{E}(X)$ is obtained by following the same line of reasoning. The last
assertion is deduced from \eqref{eq:ous} and \eqref{eq:inv} by
performing the change of variable $u=v_{\chi}(t)$, with $v_{\chi}(t)=\frac{1}{\chi}\log(1+\chi t)$.\\

\subsection{Proof of Theorem \ref{thm:2}}
Setting  $\beta=\alpha \lambda x$, the Lamperti mapping \eqref{eq:lamp} yields
\begin{eqnarray*}
Z_{t} &=&x^{-1} e^{\alpha \xi_t} \left(1 + \alpha \lambda
 \int_0^t e^{\alpha
\xi_{s}}ds\right)^{-1}\\
&=&  x^{-1}\left(\frac{1}{(1+\alpha \lambda .)^{\frac{1}{\alpha}}}X_{.}\right)^{\alpha}_{V_t} \\
&=&
x^{-1}\left(U_{v_{\chi}(.)}\right)^{\alpha}_{V_t}
\end{eqnarray*}
where the last identity follows from \eqref{eq:ous}.
The proof of the
assertion \eqref{eq:y_u} is completed  by observing that
\begin{equation*}
(v_{\chi}\left(V_t\right))' = e^{\alpha \xi_t} \left(1
+ \alpha \lambda \int_0^t e^{\alpha \xi_{s}}ds\right)^{-1}.
\end{equation*}
Moreover, since the mapping $x \mapsto x^{\alpha}$ is a homeomorphism of $\R^+$, the Feller property follows from its invariance by "nice" time change of Feller processes, see Lamperti \cite[Theorem 1]{Lamperti-67}. We also obtain the following identities
\begin{eqnarray*}
T^Z_{(a,b)} &=& \inf \left\{ u\geq0;\: Z_u \notin (a,b)\right\} \\
&=& \inf \left\{ u\geq0;\: U^{\alpha}_{{\bigtriangledown_u}} \notin \left(ax,bx\right)\right\} \\
&=& \bigtriangleup \left(\inf \left\{ u\geq0;\: U_u \notin \left((ax)^{\frac{1}{\alpha}},(bx)^{\frac{1}{\alpha}}\right) \right\}\right).
\end{eqnarray*}
The characterization of the harmonic functions of $Z$ follows. The characterization of the harmonic functions of $\hat{Y}$ are readily deduced from the ones of $Z$ and the identity
\begin{equation*}
\hat{Y}_t = \frac{1}{Z_t}, \: t \geq0.
\end{equation*}
The proof of Theorem is then completed by using the following Lemma together with an application of the optional stopping theorem.
\begin{lem}
Assume that $\Hn$ holds, then
for $\gamma, \delta \geq 0$ and $x
>0$, we have
\begin{equation}\label{eq:absou}
d\Up^{(\gamma)}_{x} =
   \left(\frac{U_t}{x}\right)^{\gamma-\delta} e^{\lambda (\gamma-\delta)t-\left(\psi(\gamma)-\psi(\delta)\right) \bigtriangleup_t }   \,
   d\Up^{(\delta)}_{x},\quad \textrm{ on }
   F^U_{t} \cap \{t<T^U_0\} .
\end{equation}
Note that for
$b>0$ the condition "$\textrm{ on }
   \{t<T^U_0\}$"  can be omitted. For the particular case $\gamma=\theta$ and $\delta=0$, the absolute
continuity relationship  \eqref{eq:absou} reduces to
\begin{equation*}
d\Up^{(\theta)}_{x} =
   \left(\frac{U_t}{x}\right)^{\theta} e^{ \lambda \theta t}   \,
   d\Up_{x},\quad \textrm{ on }
F^U_{t} \cap \{t<T^U_0\}.
\end{equation*}
\end{lem}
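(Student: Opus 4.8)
The plan is to obtain \eqref{eq:absou} by transporting, through the Lamperti-type identity \eqref{eq:mou}, the classical Esscher (exponential) change of measure for the driving L\'evy process. Fix $x>0$ and realise the laws $(\Up^{(\gamma)}_x)_{\gamma\geq0}$ of $U$ on a single space carrying a L\'evy process $\xi$ with $\xi_0=\log x$: write $\P^{(\gamma)}$ for the law under which $\xi$ has Laplace exponent $\psi_{\gamma}$, put $\bigtriangleup_t=\int_0^tU_s^{-\alpha}\,ds$ and $U_t=e^{-\lambda t}e^{\xi_{\bigtriangleup_t}}$, so that by \eqref{eq:mou} this $U$ has law $\Up^{(\gamma)}_x$ under $\P^{(\gamma)}$. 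I would first record, from the Lamperti theory, that for each fixed $t$ the clock $\bigtriangleup_t$ is an $(F^{\xi}_s)_s$-stopping time — being, up to the deterministic change $e_{\chi}$, the right-continuous inverse of $u\mapsto\int_0^ue^{\alpha\xi_s}\,ds$ — that it is a.s.~finite on $\{t<T^U_0\}$ (hence for all $t$ when $b>0$, since then $T^U_0=\infty$), and that $F^U_t\cap\{t<T^U_0\}$ coincides with $F^{\xi}_{\bigtriangleup_t}\cap\{t<T^U_0\}$ because the time change is bijective there.

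Next I would set up the change of measure on the L\'evy side. Under $\Hn$ the exponent $\psi$ is finite on $[0,\infty)$, so $\psi_{\delta}(\gamma-\delta)=\psi(\gamma)-\psi(\delta)<\infty$ for all $\gamma,\delta\geq0$, and since $\xi$ has stationary independent increments the process
\begin{equation*}
\mathcal{E}^{(\gamma,\delta)}_s:=e^{(\gamma-\delta)(\xi_s-\log x)-(\psi(\gamma)-\psi(\delta))s},\qquad s\geq0,
\end{equation*}
is a nonnegative, genuine (not merely local) $\P^{(\delta)}$-martingale in $(F^{\xi}_s)_s$. The associated Esscher change of measure $d\P^{(\gamma)}=\mathcal{E}^{(\gamma,\delta)}_s\,d\P^{(\delta)}$ on $F^{\xi}_s$ turns $\xi$ into a L\'evy process with exponent $u\mapsto\psi_{\delta}(u+\gamma-\delta)-\psi_{\delta}(\gamma-\delta)=\psi(u+\gamma)-\psi(\gamma)=\psi_{\gamma}(u)$, which is precisely the definition of $\P^{(\gamma)}$; this identifies the two laws on every $F^{\xi}_s$.

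Then I would pass through the time change. Since $\P^{(\gamma)}$ and $\P^{(\delta)}$ are consistently related by the martingale $\mathcal{E}^{(\gamma,\delta)}$ on each $F^{\xi}_s$, optional stopping at the $(F^{\xi}_s)$-stopping time $\bigtriangleup_t$ — taken as the monotone limit over the events $\{\bigtriangleup_t\leq n\}$ as $n\to\infty$, which exhaust $\{t<T^U_0\}$ because $\bigtriangleup_t<\infty$ a.s.~there — gives $d\P^{(\gamma)}=\mathcal{E}^{(\gamma,\delta)}_{\bigtriangleup_t}\,d\P^{(\delta)}$ on $F^U_t\cap\{t<T^U_0\}$. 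As the density $\mathcal{E}^{(\gamma,\delta)}_{\bigtriangleup_t}$ is a function of $U_t$ and $\bigtriangleup_t$, hence $F^U_t$-measurable, this descends to the announced identity between the laws $\Up^{(\gamma)}_x$ and $\Up^{(\delta)}_x$ of $U$; and from $U_t=e^{-\lambda t}e^{\xi_{\bigtriangleup_t}}$ with $\xi_0=\log x$ one has $\xi_{\bigtriangleup_t}-\log x=\lambda t+\log(U_t/x)$, whence
\begin{equation*}
\mathcal{E}^{(\gamma,\delta)}_{\bigtriangleup_t}=\left(\frac{U_t}{x}\right)^{\gamma-\delta}e^{\lambda(\gamma-\delta)t-(\psi(\gamma)-\psi(\delta))\bigtriangleup_t},
\end{equation*}
which is \eqref{eq:absou}. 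Finally, for $\gamma=\theta$ and $\delta=0$ the Cram\'er condition \eqref{eq:cramer} reads $\psi(\theta)=0$, while $\psi(0)=0$ trivially, so the exponential in $\bigtriangleup_t$ disappears and \eqref{eq:absou} collapses to the displayed special case.

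The step I expect to be the main obstacle is the passage through the random, possibly unbounded time $\bigtriangleup_t$: one must check that, under $\Hn$, $\mathcal{E}^{(\gamma,\delta)}$ is a true martingale (so that $\P^{(\gamma)}$ is consistently defined on the whole filtration and the monotone limit $n\to\infty$ yields an equality, not merely an inequality), and one must invoke the precise structure of the Lamperti time change to identify $F^U_t\cap\{t<T^U_0\}$ with $F^{\xi}_{\bigtriangleup_t}\cap\{t<T^U_0\}$. The remaining computations are routine.
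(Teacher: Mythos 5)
Your argument is correct, but it takes a more self-contained route than the paper. The paper's proof simply quotes the ``power Girsanov transform'' for the semi-stable process $(X,\Sp_x)$ established in \cite{Patie-06c},
\begin{equation*}
d\Sp^{(\gamma)}_{x} = \left(\frac{X_t}{x}\right)^{\gamma-\delta} e^{- (\psi(\gamma)-\psi(\delta)) A_t }\, d\Sp^{(\delta)}_{x} \quad \textrm{ on } F^X_{t}\cap \{t<T^X_0\},
\end{equation*}
and then obtains \eqref{eq:absou} by the purely \emph{deterministic} time change $t\mapsto e_{\chi}(t)$ linking $U$ to $X$, together with the identity $A_{e_{\chi}(t)}=\int_0^t U_u^{-\alpha}\,du=\bigtriangleup_t$ and the relation $X_{e_\chi(t)}=e^{\lambda t}U_t$, which produces exactly the extra factor $e^{\lambda(\gamma-\delta)t}$. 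You instead go one level deeper: you start from the Esscher transform for the driving L\'evy process $\xi$ and push it through the composed (random plus deterministic) clock of \eqref{eq:mou} in one step. In effect you are unfolding the proof of the cited result rather than using it, which is legitimate and makes the lemma self-contained; the price is that you must handle optional stopping at the random, possibly unbounded $(F^{\xi}_s)$-stopping time $\bigtriangleup_t$ and the identification $F^U_t\cap\{t<T^U_0\}=F^{\xi}_{\bigtriangleup_t}\cap\{t<T^U_0\}$ yourself --- you correctly flag both points, and your treatment (true martingale under $\Hn$ since $\psi_\delta(\gamma-\delta)=\psi(\gamma)-\psi(\delta)<\infty$, then a monotone limit over $\{\bigtriangleup_t\le n\}$ exhausting $\{t<T^U_0\}$) is the standard and correct way to do it. The paper's factorisation through $\Sp^{(\gamma)}_x$ avoids this entirely because the stopping-time issue is already absorbed in the cited reference, leaving only a deterministic substitution. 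Your computation of the density, $\xi_{\bigtriangleup_t}-\log x=\lambda t+\log(U_t/x)$, and the collapse of the special case via $\psi(\theta)=\psi(0)=0$ under \eqref{eq:cramer}, agree with the paper.
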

\begin{proof}
We start by recalling that in \cite{Patie-06c}, the following power Girsanov
transform has been derived, under $\Hn$, for $\gamma, \delta \geq 0$
and $x
>0$, with obvious notation,
\begin{equation*}
d\Sp^{(\gamma)}_{x} =
   \left(\frac{X_t}{x}\right)^{\gamma-\delta} e^{- \left(\psi(\gamma)-\psi(\delta)\right) A_t }   \,
   d\Sp^{(\delta)}_{x},\quad \textrm{ on }
   F^X_{t}\cap \{t<T^X_0\}.
\end{equation*}

 The assertion
 \eqref{eq:absou} follows readily by time change and recalling that
\begin{equation*}
A_{e_{\chi}(t)} =\int_0^{t}
U^{-\alpha}_u du.
\end{equation*}
We complete the proof by recalling that for $b>0$, $U$ does not reach $0$ a.s..
\end{proof}

\section{Applications}
In this section, we illustrate our results to
some new interesting examples.

\subsection{First passage times and overshoot of stable OU processes}
Let $X$ be an $\alpha$-stable L\'evy process whose
characteristic  exponent satisfy, for $u\in \R$,
\begin{equation*}
\Psi(iu) = -c  |u|^{\alpha}\left(1-i\beta
{\rm{sgn}}(u)\tan\left(\frac{\alpha \pi }{2}\right)\right)
\end{equation*}
where  $1<\alpha<2$ and for convenience we take $c = \left(1+\beta^2
\tan^2\left(\frac{\alpha \pi }{2}\right) \right)^{-1/2}$. Then, we
introduce the constant $\rho=\Sp(X_1>0)$ which was evaluated by Zolotarev \cite{Zolotarev-57} as follows
\begin{equation*}
\rho = \frac{1}{2}
+\frac{1}{\pi\alpha}\tan^{-1}\left(\beta\tan\left(\frac{\alpha \pi
}{2}\right)\right).
\end{equation*}
Following Doney \cite{Doney-87}, we introduce, for any integers $k,l$, the class
$C_{k,l}$ of stable processes such that
\begin{equation*}
\rho+k = l\tilde{\alpha}
\end{equation*}
where $\tilde{\alpha}=\frac{1}{\alpha}$. For $m\in \N$, $x\in \R$
and $z\in \mathbb{C}$, introduce the function
\begin{equation*}
f_{m}(x,z) = \prod_{i=0}^m\left(z+e^{ix(m-2i)\pi}\right).
\end{equation*}
Next, we recall from the Wiener-Hopf factorization of L\'evy
processes due to Rogozin \cite{Rogozin-66}, that the law of the
first passage times $\tau^X_0$ and the over(under)shoot of $X$ at the level $0$ is described by the following
identities, for $\delta, r>0$ and $p\geq0$,
\begin{eqnarray*}
\int_0^{\infty}e^{- \delta x} \E_{-x}\left[e^{- r \tau^{X}_0-p
X_{\tau^X_0-}}\right] dx &=&
\frac{1}{\delta-p}\left(1-\frac{\Psi^+(-r^{\frac{1}{\alpha}}\delta)}{\Psi^+(-r^{\frac{1}{\alpha}}p)}\right)\\
\int_0^{\infty}e^{- \delta x} \E_x\left[e^{- r \tau^X_0-p
X_{\tau^X_0-}}\right] dx &=&
\frac{1}{\delta-p}\left(1-\frac{\Psi^-(r^{\frac{1}{\alpha}}\delta)}{\Psi^-(r^{\frac{1}{\alpha}}p)}\right)
\end{eqnarray*}
where $
\left(1-\Psi(\delta)\right)^{-1} = \Psi^-(\delta)\Psi^+(\delta)$.
Here $\Psi^+(\delta)$ (resp.~$\Psi^-(\delta)$) is analytic in $\Re(\delta)<0$ (resp.~$\Re(\delta)>0$)
 continuous and nonvanishing on $\Re(\delta)\leq0$ (resp.~$\Re(\delta)\geq0$).
 Doney \cite{Doney-87} computes the Wiener-Hopf factors for
stable processes in $C_{k,l}$ as follows
\begin{eqnarray*}
\Psi^+(z)&=&\frac{f_{k-1}(\alpha,(-1)^l(-z)^{\alpha})}{f_{l-1}(\tilde{\alpha},(-1)^{k+1}z)},\quad \mathfrak{Arg}(z) \neq 0,\\
\Psi^-(z)&=&\frac{f_{l-1}(\tilde{\alpha},(-1)^{k+1}z)}{f_{k}(\alpha,(-1)^lz^{\alpha})},\quad \mathfrak{Arg}(z) \neq -\pi,\\
\end{eqnarray*}
where $z^\beta$ stands for $\sigma^\beta e^{i\beta\phi}$ when
$z=\sigma e^{i\phi}$ with $\sigma>0$ and $-\pi< \phi\leq \pi$. Observe also that  $\Psi^+(-x^{\frac{1}{\alpha}})\sim  x^{-\rho}$ for large real $x$. Moreover,
using the fact that the function $\E_{x}\left[e^{- r \tau^X_0-p
X_{\tau^X_0}}\right], \: x \in \R$, is $r$-excessive for the semigroup of $X$, we deduce from the $\Gamma$-transform the following.
\begin{cor}
For any $q,\delta> 0$, $p\geq0$, and for any integers $k,l$ such that
 $ X \in C_{k,l}$, we have
\begin{eqnarray*}
 \int^0_{-\infty}\hspace{-0.2cm}e^{\delta x} \E_{x}\left[e^{- q \tau^{U}_0-p\:
U_{\tau^{U}_0-}}\right]dx  \hspace{-0.2cm} & = &\hspace{-0.2cm}\frac{1}{\delta-p}\left(
\chi^{\frac{q}{\chi}}-\frac{1}{\Gamma(\frac{q}{\chi})}
\int_0^{\infty}\frac{\Psi^+(-r^{\frac{1}{\alpha}}\delta)}{\Psi^+(-r^{\frac{1}{\alpha}}p)}e^{-\frac{r}{
\chi}}r^{\frac{q}{\chi}-1} dr\right)\\ \int_0^{\infty}\hspace{-0.2cm}e^{- \delta x}
\E_x\left[e^{- q \tau^{U}_{0}-p\: U_{\tau^{U}_{0}-}}\right]dx \hspace{-0.2cm} &=& \hspace{-0.2cm}
\frac{1}{\delta-p}\left(
\chi^{\frac{q}{\chi}}-\frac{1}{\Gamma(\frac{q}{\chi})}
\int_0^{\infty}\frac{\Psi^-(r^{\frac{1}{\alpha}}\delta)}{\Psi^-(r^{\frac{1}{\alpha}}p)}e^{-\frac{r}{
\chi}}r^{\frac{q}{\chi}-1} dr\right).
\end{eqnarray*}
\end{cor}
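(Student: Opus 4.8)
The plan is to run the first-passage functionals of the $\alpha$-stable process $X$ through the $\Gamma$-transform of \refthm{thm:1} and then to read off the two Laplace transforms from the Rogozin--Doney factorizations recalled above. I treat first passage below $0$ (the half-line $(0,\infty)$, the factor $\Psi^-$); the companion identity (with $\Psi^+$, over $(-\infty,0)$) is obtained by the same argument run for the first passage above $0$.

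First I would record the excessivity input. By the strong Markov property of $X$, for each $r>0$ the function $h_r(x):=\E_x[e^{-r\tau^X_0-pX_{\tau^X_0-}}]$ is $r$-excessive for the $\alpha$-stable semigroup $(\St_t)$, as recalled above; in particular $h_1\in\mathfrak{E}(X)$, and since $0\le h_1\le1$ on $(0,\infty)$ one has $h_1\in L^1(\gamma(dr))$, so \refthm{thm:1} applies and produces a $q$-excessive function for $U$. On the Ornstein--Uhlenbeck side, \eqref{eq:defp} gives $U_t=e'_\lambda(-t)X_{e_\chi(t)}$ with $e'_\lambda(-t)>0$, so first passage below $0$ is carried through this deterministic, strictly increasing time change: $\tau^U_0=v_\chi(\tau^X_0)$, and, since $\chi=\alpha\lambda$,
\[
e^{-q\tau^U_0}=(1+\chi\tau^X_0)^{-q/\chi},\qquad U_{\tau^U_0-}=e'_\lambda(-\tau^U_0)\,X_{\tau^X_0-}=(1+\chi\tau^X_0)^{-1/\alpha}X_{\tau^X_0-}.
\]

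Feeding these into \eqref{eq:inv} and carrying out the change of variable $u=\chi r$ exactly as in the proof of \refthm{thm:1}, together with the $\alpha$-self-similarity \eqref{eq:ssp} used to move the starting point of $X$, one identifies the (normalised) functional $x\mapsto\chi^{q/\chi}\E_x[e^{-q\tau^U_0-p\,U_{\tau^U_0-}}]$ with the $\Gamma$-transform of $h_1$; written out, this is the $\gamma(q/\chi)$-mixture, over the killing rate $r$, of the $r$-excessive kernels $h_r$, namely $\Gamma(q/\chi)^{-1}\int_0^\infty h_r(x)\,e^{-r/\chi}r^{q/\chi-1}\,dr$, which lies in $\mathfrak{E}^q(U)$ by \refthm{thm:1}. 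It then remains to take Laplace transforms in $x$: interchanging (Fubini, all integrands non-negative) the $x$-integral with the $\gamma(q/\chi)$-average yields
\[
\chi^{q/\chi}\int_0^\infty e^{-\delta x}\,\E_x[e^{-q\tau^U_0-p\,U_{\tau^U_0-}}]\,dx=\frac{1}{\Gamma(q/\chi)}\int_0^\infty\!\Big(\int_0^\infty e^{-\delta x}h_r(x)\,dx\Big)e^{-r/\chi}r^{q/\chi-1}\,dr,
\]
and substituting the Rogozin factorization $\int_0^\infty e^{-\delta x}h_r(x)\,dx=(\delta-p)^{-1}\big(1-\Psi^-(r^{1/\alpha}\delta)/\Psi^-(r^{1/\alpha}p)\big)$, then splitting the $r$-integral — its constant part integrating to $\chi^{q/\chi}\Gamma(q/\chi)$ by the normalisation of $\gamma$ — gives the announced identity; the first-passage-above-$0$ computation gives the $\Psi^+$ formula.

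The step I expect to be the main obstacle is the probabilistic identification in the third paragraph: one must control precisely how the time change \eqref{eq:defp} couples the time-discount $(1+\chi\tau^X_0)^{-q/\chi}$ with the simultaneous rescaling $(1+\chi\tau^X_0)^{-1/\alpha}$ of the undershoot, keep track of the normalising factor $\chi^{q/\chi}$ coming from \eqref{eq:inv}, and justify the two Fubini interchanges; in the first-passage-above-$0$ case one must moreover verify $h_1\in L^1(\gamma)$, which is less transparent there since the undershoot $X_{\tau^X_0-}$ is non-positive. Once this identification is secured, the rest is the routine Fubini-and-Rogozin manipulation above.
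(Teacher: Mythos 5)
Your overall strategy -- feed the $r$-excessive functions $h_r(x)=\E_x[e^{-r\tau^X_0-pX_{\tau^X_0-}}]$ into the $\Gamma$-transform of Theorem~\ref{thm:1} and then read off the Laplace transforms from the Rogozin--Doney factorization -- is exactly the route the paper indicates (the paper itself offers no more detail than ``we deduce from the $\Gamma$-transform''). But the step you yourself flag as ``the main obstacle'' is not a technicality to be deferred: it is the entire content of the corollary, and the sketch you give of it does not go through as written. The difficulty is that under the time change \eqref{eq:defp} one has $U_{\tau^U_0-}=(1+\chi\tau^X_0)^{-1/\alpha}X_{\tau^X_0-}$, so the target functional is
$\E_x\bigl[(1+\chi\tau^X_0)^{-q/\chi}\exp\bigl(-p(1+\chi\tau^X_0)^{-1/\alpha}X_{\tau^X_0-}\bigr)\bigr]$,
in which the undershoot is coupled to the passage time. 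A Gamma-randomization of the killing rate alone gives, by Fubini,
$\Gamma(q/\chi)^{-1}\int_0^\infty h_r(x)e^{-r/\chi}r^{q/\chi-1}dr=\chi^{q/\chi}\,\E_x\bigl[(1+\chi\tau^X_0)^{-q/\chi}e^{-pX_{\tau^X_0-}}\bigr]$,
which differs from the target both in the argument of the exponential and by the factor $\chi^{q/\chi}$; the two expressions are not equal for $p>0$. Relatedly, the mixture $\int_0^\infty h_r(x)\gamma_{q/\chi}(dr)$ with $p$ held fixed is \emph{not} the $\Gamma$-transform of $h_1$ in the sense of \eqref{eq:inv}: the self-similar rescaling \eqref{eq:hs} that identifies the $r$-excessive member of the family sends $h_1(r^{1/\alpha}x)$ to $\E_x[e^{-r\tau^X_0-p\,r^{1/\alpha}X_{\tau^X_0-}}]$, i.e.\ it rescales the undershoot parameter $p$ along with the killing rate. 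So the identification of $\E_x[e^{-q\tau^U_0-pU_{\tau^U_0-}}]$ with the $\gamma(q/\chi)$-mixture of the fixed-$p$ kernels $h_r$ genuinely requires an argument (tracking simultaneously how the time discount and the undershoot transform, e.g.\ via the time--space relation \eqref{eq:ts} and optional stopping for the two-parameter family), and none is supplied.

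There is also a bookkeeping inconsistency that would surface even if the identification were granted: your displayed equation carries $\chi^{q/\chi}$ on the \emph{left}-hand side, while after substituting Rogozin the right-hand side already reproduces the corollary's stated right-hand side (the constant part of the $r$-integral contributes exactly $\chi^{q/\chi}$). Your chain therefore yields $\chi^{q/\chi}$ times the announced left-hand side equal to the announced right-hand side, not the announced identity. You need to decide whether the correct probabilistic statement is $\E_x[e^{-q\tau^U_0-pU_{\tau^U_0-}}]=\I(q;x)$ with the normalizing prefactor of \eqref{eq:inv} included or not, and make the two sides of your computation consistent with that choice.
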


\subsection{First passage times of one-sided semi-stable- and L\'evy-OU
processes} We now fix $(\St_t)_{t\geq0}$ to be the semigroup of a spectrally negative
$\frac{1}{\alpha}$-semi-stable process $X$. $X$ is then associated via the Lamperti mapping \eqref{eq:lamp} to a spectrally negative L\'evy process, $\xi$,
which we assume to have a finite mean $b$. Its characteristic exponent $\psi$ has the well known L\'evy-Khintchine representation\begin{eqnarray}\label{eq:lap-levy}
\psi(u) = bu + \frac{\sigma}{2} u^2 + \int^{0}_{-\infty} (e^{u r} -1
-ur )\nu(dr),\: u\geq0,
\end{eqnarray}
 where $ \sigma \geq 0$ and the measure
$\nu$ satisfies the integrability condition $\int_{-\infty}^0 (r
\wedge r^2 )\:\nu(dr) <+ \infty$. Patie \cite{Patie-06c} computes the
Laplace transform of the first passage times above of $X$ as
follows. For any $r \geq 0$ and $0\leq x\leq a$, we have
\begin{eqnarray} \label{eq:lap_ssm}
 {\E}_x \left[e^{-r T^X_a } \right] &=&\frac{\Ip(rx^{\alpha})}{\Ip(ra^{\alpha})}
\end{eqnarray}
where the entire
function, $\Ip$, is given, for $\gamma \geq 0$ and $\alpha>0$, by
\begin{equation*}
\Ip(z)=\sum_{n=0}^{\infty} \ann z^{n}, \quad
 z
\in  \mathbb{C}
\end{equation*}
and
\begin{equation*}
\ann^{-1}=\prod_{k=1}^n \psi(\alpha k), \quad a_0=1.
\end{equation*}
Using the $\Gamma$-transform, we introduce the following power series
\begin{equation} \label{eq:f1}
\Ip(q;z)= \sum_{n=0}^{\infty}
 \ann
 (q)_{n} z^{n}
\end{equation}
where $(q)_{n} = \frac{\Gamma(q+n)}{\Gamma(q)}$ is the Pochhammer
symbol and  we have used the integral representation of the
gamma function $\Gamma(q)=\int_0^{\infty} e^{-r}r^{q-1}dr, \: \Re(q)>0$. By means of the following
asymptotic formula of ratio of gamma functions, see
e.g.~Lebedev \cite[p.15]{Lebedev-72}, for $\delta>0$,
\begin{eqnarray}
(z+ n)_{\delta} &=& z^{\delta}\left[1+\frac{\delta(2n+\delta-1)}{2z}
+ O(z^{-2})\right], \:  |\textrm{arg } z| < \pi-\epsilon,\:
\epsilon>0,
\end{eqnarray}
we deduce that $\Ip(q;z)$ is an entire function in $z$ and is analytic on the domain
$\{q\in \C; \: \Re(q)>-1\}$.
For $b<0$, we recall that there exists $\theta>0$ such that $\psi(\theta)=0$ and thus $\psi_{\theta}(u)=\psi(\theta+u)$. In this case, by setting  $\theta_{\alpha}=\frac{\theta}{\alpha}$, it is shown in \cite{Patie-06c} that there exists a positive constant
 $C_{\theta_\alpha}$ such that
\begin{equation*}
\Ip\left(x^{\alpha}\right)\sim C_{\theta_\alpha}
 x^{\theta}\It\left(x^{\alpha}\right) \quad {\textrm{ as  }} \: x\rightarrow
 \infty.
 \end{equation*}
 We also introduce the function $\Nf_{\alpha,\psi,\theta}(q;x^{\alpha})$ defined by
 \begin{equation} \label{eq:f2}
\Nf_{\alpha,\psi,\theta}(q;x^{\alpha}) =\Ip(q;x^{\alpha})-
C_{\theta_\alpha}x^{\theta}\frac{\Gamma(q+\theta_\alpha)}{\Gamma(q)}\Itq{x^{\alpha}},\quad \Re(x)\geq0.
\end{equation}
Moreover, if we assume  that
there exists $\beta \in [0,1]$ and a constant $a_{\beta}>0$ such that $\lim_{u \rightarrow
\infty} \psi(u)/u^{1+\beta}= a_{\beta}$, then $C_{\theta_{\alpha}}$ is characterized by
\begin{eqnarray*}
 C_{\theta_{\alpha}} = \left\{
\begin{array}{lll}
 &  \frac{\Gamma(1-\theta_\alpha)}{\alpha
}\frac{(\theta_{\alpha}-1)!}{\prod_{k=1}^{\theta_{\alpha}-1}\psi(\alpha
k)}, \: & \textrm{if }  \theta_{\alpha}  \textrm{ is a positive integer}, \\
 & \\
 & \frac{\Gamma(1-\theta_\alpha)}{\alpha
}a^{-\theta_{\alpha}}_{\beta}e^{E_{\gamma} \beta
\theta_{\alpha}}\prod_{k=1}^{\infty}e^{-\frac{\beta\theta_{\alpha}}{k}}\frac{(k+\theta_{\alpha})\psi(\alpha
k)}{k\psi(\alpha k+\theta_{\alpha})},& \textrm{otherwise}, \\
\end{array}
\right.
\end{eqnarray*}
where $E_{\gamma}$ stands for the Euler-Mascheroni constant.
 We recall, also from \cite{Patie-06c}, that, for $r,x\geq0$,
 \begin{eqnarray*}
{\E}_x \left[e^{-r T^X_{0}} \right]
&=&\Ip\left(rx^{\alpha}\right)-C_{\theta_\alpha}(r^{\frac{1}{\alpha}}x)^{\theta}\It\left(rx^{\alpha}\right).
\end{eqnarray*}
We
deduce from Theorems \ref{thm:1} and \ref{thm:2} the following.
\begin{cor} \label{cor:fptsn}
Let $q \geq 0$ and $0< x \leq a$. Then,
\begin{equation*}
 \E_x \left[e^{-q T_a^{U} } \right] =\frac{\Ilq{\chi x^{\alpha}}}
{\Ilq{\chi a^{\alpha}}}
\end{equation*}
and
\begin{equation*}
 \E_x \left[\left(1+\chi T_{(\alpha)}^{X}\right)^{-\frac{q}{\chi} } \right] =\frac{\Ilq{\chi x^{\alpha}}}
{\Ilq{\chi a^{\alpha}}}
\end{equation*}
where $T_{(\alpha)}^{X} = \inf\{u\geq0; \: X_u = a(1+\chi u)^{\frac{1}{\alpha}}\}$. We also deduce that
\begin{equation*}
\E_x\left[e^{-q \bigtriangleup_{T_a^{U}}}\Id{T_a^{U}<T^U_0}\right]=
\left(\frac{x}{b}\right)^{\gamma}\frac{\Ia\left(\frac{\gamma}{\alpha};\chi x^{\alpha}\right)}{\Ia\left(\frac{\gamma}{\alpha};\chi a^{\alpha}\right)}.
\end{equation*}
Moreover, assume $b>0$ and set $\beta=\alpha \lambda x$ and  $\gamma=\phi(q)$. Then,
\begin{equation*}
\E_{\frac{1}{x}}\left[e^{-q T_a^{Z}}\right]=
\left(\frac{1}{bx}\right)^{\gamma}\frac{\Ia\left(\frac{\gamma}{\alpha};\chi \right)}{\Ia\left(\frac{\gamma}{\alpha};\chi (ax)^{\alpha}\right)}, \: 0< \frac{1}{x} \leq a ,
\end{equation*}

\begin{equation*}
 \E_a \left[e^{-q T_x^{\widehat{Y}}} \right] =\left(\frac{x}{a}\right)^{\frac{\gamma}{\alpha}}\frac{\Ia\left(\frac{\gamma}{\alpha};\chi^{\frac{1}{\alpha}}\right)}{\Ia\left(\frac{\gamma}{\alpha};\left(\frac{\chi a}{x}\right)^{\frac{1}{\alpha}}\right)}, \: 0<x \leq a.
\end{equation*}
Finally,  if $b<0$ and $0<\theta<\alpha$, we have
\begin{equation*}
 \E_x \left[e^{-q T^U_0} \right] =\frac{\Ntq{\chi x^{\alpha}}}
{\Ntq{\chi x^{\alpha}}}.
\end{equation*}

\end{cor}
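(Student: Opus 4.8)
The plan is to obtain each displayed identity by transporting the known formulas for the semi-stable process $X$ (namely \eqref{eq:lap_ssm} and the expression for $\E_x[e^{-rT^X_0}]$ recalled just above) through the three devices assembled earlier: the $\Gamma$-transform of \refthm{thm:1}, the time-change identities \eqref{eq:defp}, \eqref{eq:mou}, \eqref{eq:y_u} of \refprop{} and \refthm{thm:2}, and the absolute-continuity relation \eqref{eq:absou}. Concretely, for the first formula I would start from the observation that $x\mapsto \E_x[e^{-rT^X_a}] = \Ip(rx^\alpha)/\Ip(ra^\alpha)$ is $r$-excessive (indeed $r$-invariant up to the boundary) for $(\St_t)_{t\ge0}$ on $[0,a]$; applying \eqref{eq:inv} with this function and $\I(x)=\Ip(x^\alpha)$ (after the rescaling $r=1$ absorbed into the self-similarity, cf.\ \eqref{eq:hs}) produces $\E\big[\Ip\big((\chi G_{q/\chi})x^\alpha\big)\big]$, and the series defining $\Ip$ together with $\E[G_{q/\chi}^n]=(q/\chi)_n$ gives exactly $\Ip(q/\chi;\chi x^\alpha)$ up to the normalising constant $\chi^{q/\chi}$, which cancels in the ratio. \refthm{thm:1} then guarantees the resulting function is $q$-invariant for $U$ on the corresponding interval, and since $U$ exits $[x_0,a]$ through the upper endpoint before $T^U_0$ (using $0\le x\le a$ and the fact that reaching $a$ happens before reaching $0$, as in the structure of \eqref{eq:defp}), optional stopping identifies the ratio with $\E_x[e^{-qT^U_a}]$.

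The second identity is then immediate from \eqref{eq:ts}: the time-space invariant function $(1+\chi t)^{-q/\chi}\St_t(d_{(1+\chi t)^{-1/\alpha}}\I)(q;x)$ evaluated along the curve $X_u = a(1+\chi u)^{1/\alpha}$ and stopped at $T^X_{(\alpha)}$ yields $\E_x[(1+\chi T^X_{(\alpha)})^{-q/\chi}]$ on one side and the same ratio of $\Ip(q/\chi;\cdot)$'s on the other. For the third and fourth displays I would invoke \refthm{thm:2}: by \eqref{eq:y_u} the stopping time $T^Z_{(a^\alpha,b^\alpha)}$ is the $\bigtriangleup$-image of a first-exit time of $U$, so a $(q\Delta,B)$-harmonic function for $(U,\Up_1)$ becomes $(q,\cdot)$-harmonic for $Z$ (resp.\ $\widehat Y$ via $\widehat Y=1/Z$). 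The function $\E_x[e^{-q\bigtriangleup_{T^U_a}}\Id{T^U_a<T^U_0}]$ is handled by the last assertion of \refthm{thm:2} with $p_{\phi(q)}(x)=x^{\phi(q)}$ and the change of L\'evy measure $\psi\rightsquigarrow\psi_\gamma$: the $(\lambda\phi(q),B)$-harmonic function for $(U,\Up^{(\phi(q))}_x)$ is $\Ia(\gamma/\alpha;\chi x^\alpha)/\Ia(\gamma/\alpha;\chi a^\alpha)$ (this is the first formula applied to the tilted process, whose Laplace exponent is $\psi_\gamma$ and for which $T^U_a$ under $q\Delta$ plays the role of $T^U_a$ under $\lambda\phi(q)t$), and multiplying by $(x/b)^\gamma$ — with the constant $b$ here the mean, not the interval endpoint — yields the claim; the prefactors $(1/(bx))^\gamma$ and $(x/a)^{\gamma/\alpha}$ in the $Z$- and $\widehat Y$-formulas come from composing this with $Z_t = x^{-1}U^\alpha_{\bigtriangledown_t}$ and $\widehat Y = 1/Z$ and tracking the starting points $x^{-\alpha}$, $x^\alpha$. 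Finally, the $b<0$, $0<\theta<\alpha$ case: apply the $\Gamma$-transform to the $r$-excessive function $x\mapsto\E_x[e^{-rT^X_0}] = \Ip(rx^\alpha) - C_{\theta_\alpha}(r^{1/\alpha}x)^\theta\It(rx^\alpha)$, using $\E[(\chi G_{q/\chi})^{n}]$ on the first series and $\E[(\chi G_{q/\chi})^{n+\theta_\alpha}] = \chi^{\theta_\alpha}(q/\chi)_{n+\theta_\alpha}\Gamma(q/\chi)^{-1}\Gamma(q/\chi)\cdots$, which is precisely what produces the $\Gamma(q+\theta_\alpha)/\Gamma(q)$ factor in \eqref{eq:f2}, to land on $\Ntq{\chi x^\alpha}$; here the recurrent extension of $(X,T^X_0)$ furnished by Rivero, and its OU counterpart from the Proposition, make $T^U_0$ the right object.

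I expect the main obstacle to be the bookkeeping in the $(q\Delta)$-harmonic cases — specifically, verifying rigorously that optional stopping applies (the relevant martingales/supermartingales must be shown uniformly integrable up to the exit times, and one must confirm the exit from $(ax,bx)$ occurs at the intended endpoint before absorption at $0$), and correctly matching the parameter $\gamma=\phi(q)$ through both the Girsanov tilt \eqref{eq:absou} and the time change $A_{e_\chi(t)}=\int_0^t U_u^{-\alpha}du$ so that the exponent $e^{\lambda(\gamma-\delta)t-(\psi(\gamma)-\psi(\delta))\bigtriangleup_t}$ collapses exactly to the factor $x^\gamma$ and the eigenvalue shift $q\Delta \leftrightarrow \lambda\phi(q)$. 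The purely analytic parts — interchanging $\E$ with the defining power series (justified by $\I\in L^1(\gamma(dr))$, i.e.\ absolute convergence of $\sum \ann (q)_n z^n$ on $\C$ as already established) and the $\chi$-scalings — are routine once the probabilistic identifications are in place.
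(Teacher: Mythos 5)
Your proposal follows essentially the same route the paper intends: the paper offers no argument beyond ``We deduce from Theorems \ref{thm:1} and \ref{thm:2} the following,'' and your deduction --- the $\Gamma$-transform of the $r$-invariant exit function for $X$ via $\E[G_{q/\chi}^{\,n}]=(q/\chi)_n$ followed by optional stopping, identity \eqref{eq:ts} for the second display, the harmonic-function correspondences and the $p_{\phi(q)}$-tilt of \refthm{thm:2} for the $Z$-, $\widehat{Y}$- and $(q\Delta)$-cases, and the $\Gamma$-transform of $\E_x[e^{-rT^X_0}]$ for the last display --- is exactly the intended chain. One small correction: the prefactor in the third display comes from normalizing the $(q\Delta)$-harmonic function $p_{\gamma}\Hq$ at the exit level, so it should read $(x/a)^{\gamma}$; the paper's $b$ there (like the identical numerator and denominator in the final display) is a typographical slip, not the mean $\E[\xi_1]$ as your parenthetical suggests.
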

\begin{remark}
From the strong Markov property and the absence of positive jumps, we easily get that first passage times above for the processes $U$ and $Z$  are infinitely divisible random variables. Hence, we obtain from Corollary \ref{cor:fptsn}, that  the functions \eqref{eq:f1} and \eqref{eq:f2} are Laplace transforms, with respect to the parameter $q$, of infinitely divisible distributions concentrated on the positive real line.
\end{remark}
We end up by investigating some special cases which allow to make some connections between the power series introduced and some well-known or new special functions.
\subsubsection{The confluent hypergeometric functions}
 \label{sec:b} We first consider a
Brownian motion with drift $-\nu$, i.e.~ $\psi(u)=\frac{1}{2}u^2-\nu
u$. Setting $\alpha=2$, we have $\theta=2\nu$ and therefore we
assume $\nu<1$. Its associated semi-stable process is well known to
be a Bessel process of index $\nu$ and thus the associated
Ornstein-Uhlenbeck process is, in the case $n=2\nu+1 \in \N$,  the radial norm of
$n$-dimensional Ornstein-Uhlenbeck process. We get
\begin{equation*}
\I_{2,\psi}(x)=(x/2)^{\nu/2}\Gamma(-\nu+1){\rm{I}}_{-\nu}\left(\sqrt{2x}\right)
\end{equation*}
where $
{\rm{I}}_{\nu}(x)=\sum_{n=0}^{\infty}\frac{(x/2)^{\nu+2n}}{n!\Gamma(\nu+n+1)}$
 stands for the modified Bessel function of index $\nu$, see e.g.~\cite[5.]{Lebedev-72}, and
\begin{eqnarray*}
\I_{2,\psi}(q;x^2)&=&\Phi\left(q,1-\nu,\frac{x^2}{2}\right) \\
\I_{2,\psi_{2\nu}}(q;x^2)&=&\Phi\left(q+\nu,\nu+1,\frac{x^2}{2}\right)
\end{eqnarray*}
where
$\Phi(q,\nu,x)=\sum_{n=0}^{\infty}\frac{(q)_n}{(\nu)_n n!}x^n$
 stands for the confluent hypergeometric function of the first kind, see e.g.~\cite[9.9]{Lebedev-72}. Using the asymptotic behavior of the Bessel function
  \begin{equation*}
{\rm{I}}_{\nu}(x)\sim\frac{e^{x}}{\sqrt{2\pi x}} \quad  \textrm{ as } x
\rightarrow \infty,
\end{equation*}
we deduce that $C_{2\nu}=- \frac{\Gamma(-\nu)}{\Gamma(\nu)}$. Hence,
 \begin{eqnarray*}
\Nf_{\alpha,\psi_{2\nu}}(q;x^2)&=&\left(\Phi\left(q,1-\nu,\frac{x^2}{2}\right)+x^{2\nu}\frac{\Gamma(-\nu)\Gamma(q+\nu)}{\Gamma(\nu)\Gamma(q)}\Phi\left(q,1-\nu,\frac{x^2}{2}\right)\right)\\
&=&\frac{\Gamma(q)\Gamma(q+\nu)}{\Gamma(\nu)}\Lambda\left(q,\nu+1,\frac{x^2}{2}\right)
\end{eqnarray*}
where $\Lambda(q,\nu+1,\frac{x^2}{2})$ is the confluent hypergeometric of the second kind. We mention that, in this case,  the results of Corollary \ref{cor:fptsn} are well-known and can be found in Matsumoto and Yor \cite{Matsumoto-Yor-00} and in Borodin and Salminen \cite[II.8.2]{Borodin-Salminen-02}.

\subsubsection{Some generalization of the Mittag-Leffler function} \label{sec:ml}
Patie \cite{Patie-06-poch} introduced a new parametric
family of one-sided L\'evy processes which are characterized by the
following Laplace exponents, for any  $ 1<\alpha< 2$,
and $\gamma
>1-\alpha$,
\begin{equation} \label{eq:lap_poch}
 \psi_{\gamma}(u)=
\frac{1}{\alpha}\left((
u+\gamma-1)_{\alpha}-(\gamma-1)_{\alpha}\right).
\end{equation}
Its characteristic triplet are $\sigma=0$, $\nu(dy)=\frac{
\alpha(\alpha-1)}{\Gamma(2-\alpha)
}\frac{e^{(\alpha+\gamma-1)y}}{(1-e^{y})^{\alpha+1}}dy, y<0,$
and $ b_{\gamma}=
(\gamma)_{\alpha}(\Upsilon(\gamma-1+\alpha)-\Upsilon(\gamma-1))$ where
$\Upsilon(\lambda)=\frac{\Gamma'(\lambda)}{\Gamma(\lambda)}$ is the
digamma function. In particular, if $\gamma_0 $ denotes  the zero of
the function $\gamma \rightarrow b_{\gamma}  $, then for $\gamma
\geq\gamma_0\in (1-\alpha, 0)$, $b\geq 0$.\\
{\it The case $\gamma=0$.} \eqref{eq:lap_poch} reduces to $ \psi(u)=
\frac{1}{\alpha}(u-1)_{\alpha}$. Observe that $\theta=1$,
$\psi'(1)=\frac{\Gamma(\alpha)}{\alpha}$ and
\begin{equation*}
a_n(\psi;\alpha)^{-1}=\frac{\Gamma(\alpha(n+1))}{\Gamma(\alpha)},
\quad a_0=1.
\end{equation*}
The series \eqref{eq:f1} can be written as follows
\begin{eqnarray*}
\I_{1,\psi_1}(q;x)&=&\Gamma(\alpha)\M^q_{\alpha,\alpha}(\alpha
x)\\
\I_{1,\psi}(q;x)&=&\Gamma(\alpha-1)\M^q_{\alpha,\alpha-1}(\alpha
x)
\end{eqnarray*}
where
\begin{equation*}
 \M^q_{\alpha,\beta}(z)= \sum_{n=0}^{\infty}  \frac{(q)_n z^n}{\Gamma(\alpha n+\beta)}, \quad
 z
\in  \mathbb{C},
\end{equation*}
stands for the Mittag-Leffler function of parameter
$\alpha,\beta,q>0,$ which was introduced by Prabhakar \cite{Prabhakar-71}. Moreover, we have, see e.g.~\cite{Patie-06c},
\begin{equation*}
\sum_{n=0}^{\infty}\frac{z^{\alpha n}}{\Gamma(\alpha n+\beta)} \sim
\frac{1}{\alpha}e^{x}x^{1-\beta} l(x^{\alpha}) \quad {\rm{ as }} \:
x \rightarrow \infty,
\end{equation*}
with $l$ a slowly varying function at infinity.
Thus, $C_{\frac{1}{\alpha}}=\frac{\alpha}{\alpha-1}$ and
\begin{equation*}
\Nf_{\alpha,\psi_1}(q;x^{\alpha}) =
\M^q_{\alpha,\alpha-1}(x^{\alpha})-\frac{\alpha
x}{\alpha-1}\frac{\Gamma(q+\frac{1}{\alpha})}{\Gamma(q)}\M^q_{\alpha,\alpha}(x^{\alpha}).
\end{equation*}

As concluding remarks, we first mention that in the diffusion case,
i.e.~when $(U,\Up_x)$ is the Ornstein-Uhlenbeck process associated
to a Bessel process, see \ref{sec:b}, the law of the first passage
time above can be expressed as an infinite convolution  of
exponential distributions with parameters given by the sequence of
positive zeros of the confluent hypergeometric function, see
Kent \cite{Kent-80} for more details. Beside this case, we do not know
whether such a representation  is available. For instance, the
location of the zeros of the generalized Mittag-Leffler functions,
considered in the second example treated above,  is still an open
problem, see e.g.~Craven and Csordas\cite{Craven-Csordas-06}.

\end{document}